\renewcommand*\subjclass[2][2000]{%
  \def\@subjclass{#2}%
  \@ifundefined{subjclassname@#1}{%
    \ClassWarning{\@classname}{Unknown edition (#1) of Mathematics
      Subject Classification; using '1991'.}%
  }{%
    \@xp\let\@xp\subjclassname\csname subjclassname@#1\endcsname
  }%
}
\newtheorem{theorem}{Theorem}[section]
\newtheorem{lemma}[theorem]{Lemma}
\newtheorem*{lemma*}{Lemma}
\newtheorem{corollary}[theorem]{Corollary}
\newtheorem{problem}[theorem]{Problem}
\def\1ton{1,2,\ldots,n}
\def\ID{{\Bbb D}}
\newcommand{\R}{\mathbb{R}}
\newcommand{\csch}{\operatorname{csch}}
\newcommand{\dif}{\text{dif}}
\theoremstyle{definition}
\newtheorem{example}[theorem]{Example}
\theoremstyle{remark}
\newtheorem{remark}[theorem]{Remark}
\numberwithin{equation}{section}
\renewcommand{\imath}{i} 
\def\XXint#1#2#3{{\setbox0=\hbox{$#1{#2#3}{\int}$}
\vcenter{\hbox{$#2#3$}}\kern-.5\wd0}}
\def\ge{\geqslant}
\begin{document}
\title[Schwarz lemma for real functions onto surfaces ]{Schwarz lemma for real harmonic functions onto surfaces with non-negative Gaussian curvature} \subjclass{Primary 30F15  }


\keywords{Harmonic mappings, minimal surfaces}
\author{David Kalaj}
\address{University of Montenegro, Faculty of Natural Sciences and
Mathematics, Cetinjski put b.b. 81000 Podgorica, Montenegro}
\email{davidk@ucg.ac.me}

\author{Miodrag Mateljevi\'c}
\address{Faculty of mathematics, University of Belgrade, Studentski Trg 16,
Belgrade, Republic of Serbia}
\email{miodrag@matf.bg.ac.rs}

\author{Iosif Pinelis}
\address{Department of Mathematical Sciences
Michigan Technological University
1400 Townsend Drive
Houghton, Michigan 49931-1295
U.S.A.}
\email{ipinelis@mtu.edu}

\begin{abstract}
Assume that $f$ is a real $\rho$-harmonic function of the unit disk $\mathbb{D}$ onto the interval $(-1,1)$, where $\rho(u,v)=R(u)$ is a metric defined in the infinite strip $(-1,1)\times \mathbb{R}$. Then we prove that $|\nabla f(z)|(1-|z|^2)\le \frac{4}{\pi}(1-f(z)^2)$ for all  $z\in\mathbb{D}$, provided that $\rho$ has a non-negative Gaussian curvature. This extends several results in the field and answers to  a conjecture proposed by the first author in 2014. Such an inequality is not true for negatively curved metrics.

\end{abstract}
\maketitle
\tableofcontents
\section{Introduction}
\subsection{Schwarz lemma}
The standard Schwarz lemma states that if $f$ is a holomorphic mapping of the unit disk $\mathbb{D}$ into itself such that $f(0)=0$ then $|f(z)|\le |z|$.

Its counterpart for harmonic mappings states the following (\cite[Section~4.6]{Duren2004}).  Let $f$ be a complex-valued function harmonic in the unit disk $\mathbb{D}$ into itself, with $f (0) = 0$. Then $$| f (z)| \le \frac{ 4}{\pi}\tan^{-1} |z|,$$ and this inequality is sharp for
each point $z \in \mathbb{D}$. Furthermore, the bound is sharp everywhere (but is attained
only at the origin) for univalent harmonic mappings $f$ of $\mathbb{D}$ onto itself with
$f (0) = 0.$

The standard Schwarz lemma (also called Schwarz--Pick lemma) for holomorphic mappings states that every holomorphic mapping $f$ of the unit disk onto itself satisfies the inequality \begin{equation}\label{schar}
|f'(z)|\le \frac{1-|f(z)|^2}{1-|z|^2}.
\end{equation}
A very important version of the Schwarz lemma for holomorphic functions has been obtained by Ahlfors \cite{A}, who proved the following: Let $f$ be a holomorphic map of the unit
disk $\mathbb{D}$ into a Riemann surface $S$ endowed with a Riemannian metric $\rho$ with Gaussian
curvature $\mathcal{K} \le -1$. Then the hyperbolic length of any curve in $\mathbb{D}$ is no less than
the length of its image. Equivalently,
$$ d_\rho(f(z), f(w)) \le  d_h(z, w)\text{ for all $z, w \in  \mathbb{D}$}$$
or
$ \|df(z)\|\le  1$ everywhere,
where the norms are taken with respect to the hyperbolic metric on $\mathbb{D}$ and the given
metric on the image. For some other generalizations of the Schwarz lemma we refer to the papers of  Yau \cite{Y, Ya2}, Osserman \cite{oser},  Yang and Zheng \cite{YaZh}, Royden \cite{royden}, Ni,
\cite{Ni1, Ni2} and Broder \cite{Br1}.
The recent survey by Broder \cite{Br2} also provides references to the Schwarz lemma in other
contexts. Most of mentioned papers deal with Schwarz lemma for holomorphic functions, and the target space has a non-positive curvature.

We refer as well to some generalizations of Schwarz lemma for harmonic functions the papers \cite{kavu,cvee5, colo, bur, mmarkovic, mat1, mat2, petar, apde}.

In particular, the following result was proved in \cite{kavu}.
If $f\colon\mathbb{D}\to (-1,1)$ is a real harmonic mapping, then it satisfies the inequality $$|\nabla f(z)|\le \frac{4}{\pi}\frac{1-f(z)^2}{1-|z|^2}\text{ for all }  z\in\mathbb{D}.$$ Later, by using the same  approach as that in \cite{kavu},
Chen \cite{chenm} improved the latter inequality by showing that
$$|\nabla f(z)|\le \frac{\cos(\frac{\pi}{2}f(z))}{1-|z|^2}\text{ for all } z\in\mathbb{D}.$$

In order to state our main result, let us introduce the class of $\varrho$-harmonic mappings.

\subsection{$\varrho$-Harmonic mappings}
Assume that $\Omega$ is a connected open set in the complex plane. Assume that  $\varrho$ is a positive continuous function in $\Omega$.
Then (by abusing the notation) it defines a conformal metric $\varrho(z)= \varrho(z)dz\otimes \overline{dz}$ in $\Omega$.
Then $\varrho$ defines a  Riemann surface $(\Omega, \varrho)$.
Moreover, assume that $\varrho$ is a smooth function in $\Omega$ with 
Gaussian
curvature $\mathcal{K}_\varrho$, 
where
\begin{equation}\label{gaus}\mathcal{K}_\varrho(z):=-\frac{\Delta \log \varrho(z)}{\varrho^2(z)}.\end{equation}
Here $\Delta$ denote the usual Laplacian: $$\Delta g(z):=g_{xx}+g_{yy},\ z=x+\imath y.  $$ We assume $\sup_{z\in \Omega} |\mathcal{K}_\varrho(z)|<\infty$ and $\varrho$ has a finite area defined by
$$\mathcal{A}(\varrho)=\int_{\Omega}\varrho^2(u+iv)\, du\, dv. 
$$

Let $f : (D, \delta) \to  (\Omega, \varrho)$  be a $\mathcal{C}^2$ map two  Riemann surfaces,
where $\delta$ is the (pullback to D via the inclusion of the) Euclidean metric. We say that
$f$ is harmonic if
\begin{equation}\label{el}
f_{z\overline z}+({(\log \varrho^2)}_w\circ f) \cdot f_z\,f_{\bar z}=0,
\end{equation}
where $z$ and $w$ are holomorphic coordinates on $D$ and $\Omega$,
respectively.
Recall that a Euclidean harmonic function $f$ is a solution of the Laplace equation $\Delta f=0$ and in this case $\varrho\equiv 1$.
 Also, $f$ satisfies \eqref{el} if and only if its Hopf
differential 
\begin{equation}\label{anal}
\mathrm{Hopf}(f):=(\varrho^2 \circ f) f_z\overline{f_{\bar z}}
\end{equation}
is a
holomorphic quadratic differential on $D$.

Assume $f : \mathbb{D} \to (-1, 1)$ is a real $\rho$-harmonic function. Vuorinen and the first
named author in \cite{kavu} introduced the quantity
\begin{equation}\label{sf}S(f):=|\nabla f(z)| \frac{1-|z|^2}{1-|f(z)|^2}\end{equation}
and showed that $S(f) \le \frac{4}{\pi}$
for Euclidean harmonic functions.
In order to extend the results in \cite{kavu}, the first named author in \cite{rocky} defined the class of admissible metrics. We say that a metric $\varrho$ is admissible if $\varrho(z)=\varphi(|z|)$, where $\varphi:  \mathbb{D}  \to \mathbb{C}\setminus (-\infty,0]$
 is an analytic function defined in the
unit disk satisfying the following properties:
\begin{enumerate}
\item   $\varphi(|z|) \le |\varphi(z)|$  and $\varphi$ is nonincreasing in $[0, 1]$,
\item  $\varphi(-1,1)\subset \mathbb{R}$  and $\int_0^1(\sqrt{\varphi(x)}-\sqrt{\varphi(-x)})dx=0$.
\end{enumerate}\
Then inequality \eqref{sf} was extended
by the first named author \cite{rocky} to $\varrho$-harmonic functions, where $\varrho$ is an \emph{admissible metric}.
The following question was posed in \cite{rocky}
\begin{problem}\label{prob} Let $f : \mathbb{D}\to  (-1, 1)$ be a real $\varrho$-harmonic function. Suppose $\varrho$
has non-negative Gaussian  curvature. Does the bound S$(f) \le\frac{4}{\pi}$
hold?
\end{problem}

\begin{remark}
{\it The assumption that the target domain has a non-negative Gaussian curvature is a crucial, and this is shown in Example~\ref{myex}.
This problem is somehow complementary to the already mentioned Scharz lemma type result of Ahlfors for holomorphic functions of the unit disk onto a surface with a non-positive Gaussian curvature.}
\end{remark}
We will see in the Example~\ref{primjer} below that the answer to the question posed in Problem~\ref{prob} is no. However,
it will be shown in this paper
that a real $\varrho$-harmonic function is also harmonic with respect to the \emph{modified} metric $\rho(u,v)=\varrho(u,0)$, and the positiveness of the Gaussian curvature of $\rho$ 
will be crucial.

Indeed, we shall prove the following theorem, which is the main content of this paper.
\begin{theorem}\label{prop}
Assume that $f$ is a real $\varrho$-harmonic function of the unit disk onto the interval $(-1,1)$. If $\rho(u,v)=\varrho(u,0)$, then $f$ is $\rho$-harmonic. Assume further that the Gaussian curvature of $\rho$ is non-negative. Then we have the sharp  inequality \begin{equation}\label{sf1}S(f)=|\nabla f(z)| \frac{1-|z|^2}{1-|f(z)|^2}\le \frac{4}{\pi} \text{\ \ for all }   z\in\mathbb{D}.\end{equation}
\end{theorem}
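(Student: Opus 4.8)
The plan splits naturally along the two assertions. For the first, I would use that $f$ is real-valued, so that $f_{\bar z}=\overline{f_z}$ and hence $f_zf_{\bar z}=|f_z|^2$ and $f_{z\bar z}=\tfrac14\Delta f$ are both real. Taking the real part of \eqref{el} and using $\re\big((\log\varrho^2)_w\big)=\tfrac12(\log\varrho^2)_u$ shows that $f$ satisfies
\begin{equation*}
f_{z\bar z}+\tfrac12\big((\log\varrho^2)_u\circ f\big)\,|f_z|^2=0 .
\end{equation*}
Since $\rho(u,v)=\varrho(u,0)$ is independent of $v$ and agrees with $\varrho$ together with its $u$-derivative on the real axis — exactly where $f$ takes its values — this is precisely the $\rho$-harmonic equation, giving the first claim. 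The same $v$-independence yields $\Delta\log\rho=(\log R)''$ with $R:=\varrho(\cdot,0)$, so $\mathcal{K}_\rho=-(\log R)''/R^2$, and the hypothesis $\mathcal{K}_\rho\ge0$ is equivalent to $\log R$ being concave on $(-1,1)$.

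The engine of the proof is a linearization that uses only that $\rho$ depends on $u$ alone. Let $\Phi$ be an antiderivative of $R$ and put $g:=\Phi\circ f$. A direct computation gives $g_{z\bar z}=\big(\Phi''-\Phi'(\log R)'\big)(f)\,|f_z|^2$, which vanishes because $\Phi'=R$ forces $\Phi''=R'=R(\log R)'=\Phi'(\log R)'$. Hence $g$ is an ordinary Euclidean harmonic function mapping $\mathbb{D}$ onto the interval $J=(\Phi(-1),\Phi(1))$ of length $I:=\int_{-1}^1 R$. Being real harmonic, $g=\re G$ for a holomorphic $G$ sending $\mathbb{D}$ into the strip $\{\,\Phi(-1)<\re w<\Phi(1)\,\}$, and $|\nabla g|=|G'|$.

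Next I would exploit invariance: the $\rho$-harmonic equation and the quantity $S(f)$ are preserved under precomposition with a Möbius automorphism $\phi$ of $\mathbb{D}$, since $S(f\circ\phi)(0)=S(f)(\phi(0))$, so it suffices to prove \eqref{sf1} at $z=0$. Writing $c:=f(0)$ and applying the sharp Euclidean harmonic Schwarz bound (the Kalaj–Vuorinen inequality $|\nabla h(0)|\le\frac{4}{\pi}(1-h(0)^2)$ after rescaling $g$ to $(-1,1)$, or equivalently the Schwarz–Pick lemma for the strip) yields $|\nabla g(0)|\le \frac{8}{\pi I}\big(\int_c^1 R\big)\big(\int_{-1}^c R\big)$. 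Since $|\nabla f(0)|=|\nabla g(0)|/R(c)$, the bound \eqref{sf1} at the origin reduces to the one–variable inequality
\begin{equation*}
2\Big(\int_c^1 R\Big)\Big(\int_{-1}^c R\Big)\le (1-c^2)\,R(c)\int_{-1}^1 R\qquad(c\in(-1,1)).
\end{equation*}

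The main obstacle is this last inequality, and it is exactly here that concavity of $\log R$ must enter; for $R\equiv1$ both sides equal $1-c^2$, which simultaneously pins down the sharp constant $\tfrac{4}{\pi}$ and the extremal Euclidean map. I would attack it by comparing $R$ with its log–linear tangent $\bar R(u)=R(c)\,e^{(\log R)'(c)(u-c)}$, which dominates $R$ by concavity of $\log R$ and meets it to first order at $c$; an expansion at the endpoints $c\to\pm1$ shows the inequality degenerates to equality there and that its second–order term has the correct sign precisely when $\log R$ is concave, the sign reversing for log–convex (negatively curved) metrics, consistent with the failure exhibited in Example~\ref{myex}. Establishing the inequality on all of $(-1,1)$, rather than only near the endpoints, is the genuinely delicate step; I expect it to require a careful monotonicity/convexity analysis of the difference of the two sides, reducing the extremal case to log–linear profiles where everything can be evaluated in closed form.
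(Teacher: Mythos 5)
Your reduction is correct and is essentially the paper's own: the first claim follows by taking real parts of \eqref{el}, the substitution $g=\Phi\circ f$ with $\Phi'=R$ is exactly the paper's $g=H(f)/H(1)$, the Euclidean harmonic Schwarz bound at the origin gives $|\nabla g(0)|\le \frac{8}{\pi I}\bigl(\int_c^1R\bigr)\bigl(\int_{-1}^cR\bigr)$, and the one-variable inequality
\begin{equation*}
2\Bigl(\int_c^1R\Bigr)\Bigl(\int_{-1}^cR\Bigr)\le(1-c^2)\,R(c)\int_{-1}^1R
\end{equation*}
to which you reduce everything is precisely the paper's Lemma~\ref{propi1} applied to $\phi(v)=H(v)/H(1)$ (write $1-\phi(c)^2=(1-\phi(c))(1+\phi(c))$). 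One small omission: you use that $J=(\Phi(-1),\Phi(1))$ is a bounded interval, i.e.\ $R\in\mathcal{L}^1(-1,1)$; this does need a word, though it follows in one line from concavity of $\log R$ (bound $R$ by $R(0)e^{R'(0)t/R(0)}$), as the paper notes.

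The genuine gap is that you do not prove this last inequality, and it is the entire analytic content of the theorem; everything before it is bookkeeping plus the known Euclidean case. Your sketched attack (compare $R$ with the log-linear tangent $\bar R(u)=R(c)e^{(\log R)'(c)(u-c)}$, reduce to log-linear extremals) is indeed the right direction and is what the paper does, but the execution is nontrivial and missing. Concretely, after the tangent-line bound $h(t)\le h(c)+k(t-c)$ with $h=\log R$ you get \emph{upper} bounds on $\int_c^1R$ and $\int_{-1}^cR$ and on $\int_{-1}^1R$ simultaneously, and since the target inequality has $\int_{-1}^1R$ on the favorable side, you cannot simply substitute the tangent profile; the paper handles this by introducing $U=R(c)$ as a free parameter, showing the relevant ratio is nonincreasing in $U$, and using the normalization $\int_{-1}^1R=2$ to pin down the extremal value $U_{k,c}=e^{kc}k/\sinh k$. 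Even after that reduction one is left with the explicit inequality $2(\cosh k-\cosh kc)\le k\sinh k\,(1-c^2)$ for all real $k$ and $c\in[-1,1]$, which is not an endpoint expansion but a global statement; the paper proves it by a third-derivative sign analysis ($\dif'''\le0$, hence $\dif$ is convex-then-concave, combined with $\dif'(0)=\dif'(1)=\dif(1)=0$). Until you supply these steps, the proof is incomplete at exactly the point where the hypothesis $\mathcal{K}_\rho\ge0$ does its work.
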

\begin{corollary}\label{rrje}
Assume that $\Omega$ is a hyperbolic domain in the complex plane and let $\lambda=\lambda_\Omega$ be its hyperbolic metric of constant Gaussian curvature equal to $-4$. Let $f:\Omega\to (-1,1)$ be a $\rho$-harmonic function, where $\rho(u,v)=R(u)$ has a non-negative Gaussian curvature. Then we have the following sharp inequality: \begin{equation}
d_h(f(z),f(w))\le \frac{4}{\pi}d_\lambda(z,w)\text{\ \;for all } z,w\in\Omega.
\end{equation}
Here $d_h$ is the hyperbolic metric in the unit disk defined by $$d_h(z,w) =\tanh^{-1}\frac{|z-w|}{|1-z\bar w|}.$$
\end{corollary}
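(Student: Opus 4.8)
The plan is to reduce the global distance estimate to the pointwise (infinitesimal) inequality furnished by Theorem~\ref{prop}, passing through the universal cover of $\Omega$. Let $\pi\colon\mathbb{D}\to\Omega$ be a holomorphic universal covering map, so that the hyperbolic metric $\lambda=\lambda_\Omega$ of curvature $-4$ satisfies the local-isometry relation $\lambda_\Omega(\pi(\zeta))\,|\pi'(\zeta)|=\frac{1}{1-|\zeta|^2}=:\lambda_{\mathbb{D}}(\zeta)$. Set $F:=f\circ\pi\colon\mathbb{D}\to(-1,1)$. Since $\pi$ is holomorphic and the $\varrho$-harmonic map equation \eqref{el} is conformally invariant in the domain, $F$ is again $\rho$-harmonic; concretely, from $F_\zeta=(f_w\circ\pi)\,\pi'$ and $F_{\bar\zeta}=(f_{\bar w}\circ\pi)\,\overline{\pi'}$ one computes $\mathrm{Hopf}(F)=(\mathrm{Hopf}(f)\circ\pi)\,(\pi')^2$, which is holomorphic because $\mathrm{Hopf}(f)$ is. As $\rho(u,v)=R(u)$ has non-negative Gaussian curvature, Theorem~\ref{prop} applies to $F$ and yields $S(F)\le\frac{4}{\pi}$ on $\mathbb{D}$.

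Next I would convert this into an infinitesimal estimate on $\Omega$. Because $f$ is real and $\pi$ is holomorphic, the chain rule gives $|\nabla F(\zeta)|=|\nabla f(\pi(\zeta))|\,|\pi'(\zeta)|$ and $F(\zeta)=f(\pi(\zeta))$. Substituting into $S(F)=|\nabla F(\zeta)|\frac{1-|\zeta|^2}{1-F(\zeta)^2}\le\frac{4}{\pi}$ and dividing by $|\pi'(\zeta)|$, then using the covering relation $\lambda_{\mathbb{D}}(\zeta)/|\pi'(\zeta)|=\lambda_\Omega(\pi(\zeta))$ and surjectivity of $\pi$, gives
\begin{equation*}
\frac{|\nabla f(w)|}{1-f(w)^2}\le\frac{4}{\pi}\,\lambda_\Omega(w)\qquad\text{for all }w\in\Omega.
\end{equation*}
This says precisely that $f$ contracts the hyperbolic metric of $\Omega$ into the $d_h$-metric of $(-1,1)$, whose density is $\frac{1}{1-t^2}$, up to the constant $\frac{4}{\pi}$.

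Finally I would integrate. Given $z,w\in\Omega$, let $\gamma$ be the $\lambda_\Omega$-geodesic joining them. The image $f\circ\gamma$ is a path in $(-1,1)$; since the real-axis segment between two real points is a $d_h$-geodesic, $d_h(f(z),f(w))$ is at most the $d_h$-length of $f\circ\gamma$. Using $|(f\circ\gamma)'|\le|\nabla f|\,|\gamma'|$ together with the displayed pointwise bound,
\begin{equation*}
d_h(f(z),f(w))\le\int_\gamma\frac{|\nabla f|}{1-f^2}\,|d\gamma|\le\frac{4}{\pi}\int_\gamma\lambda_\Omega\,|d\gamma|=\frac{4}{\pi}\,d_\lambda(z,w),
\end{equation*}
which is the claim. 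Sharpness is inherited from Theorem~\ref{prop}: taking $\Omega=\mathbb{D}$ (so $\pi=\mathrm{id}$ and $\lambda_\Omega=\lambda_{\mathbb{D}}$) reduces the inequality to the integrated form of the sharp bound $S(f)\le\frac{4}{\pi}$.

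The main obstacle I anticipate is the bookkeeping of the first step rather than the integration: one must verify rigorously that $F=f\circ\pi$ satisfies the hypotheses of Theorem~\ref{prop}, i.e.\ that \eqref{el} is genuinely invariant under the conformal precomposition by $\pi$ (the Hopf-differential computation above is the clean way to see this), that the curvature hypothesis is unaffected (as the target metric $\rho$ is left fixed), and that the transformation law $|\nabla F(\zeta)|=|\nabla f(\pi(\zeta))|\,|\pi'(\zeta)|$ holds because $f$ is real-valued and $\pi$ holomorphic. Once these are in place, the passage to distances via the covering and geodesic integration is routine.
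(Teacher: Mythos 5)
Your proposal is correct and follows essentially the same route as the paper: lift $f$ through a holomorphic covering $\mathbb{D}\to\Omega$, apply the pointwise bound $S\le 4/\pi$ of Theorem~\ref{prop} to the lift, translate it via the covering relation into $\frac{|\nabla f(w)|}{1-f(w)^2}\le\frac{4}{\pi}\lambda_\Omega(w)$, and integrate along paths joining the two points. The only difference is that you spell out (via the Hopf differential) the conformal invariance of $\rho$-harmonicity under precomposition, which the paper merely asserts.
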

The proof of the first part of Theorem~\ref{prop} is an easy matter and it is presented in Section~\ref{subse1}, while the second part is the content of Theorem~\ref{main}.
Corollary~\ref{rrje} is a straightforward application of the definition of the hyperbolic metric. We only need to notice the following. If $g\colon \mathbb{D}\to \Omega$ is a covering map, then $h(z) = f(g(z))$ is a real $\rho$-harmonic mapping of the unit disk onto $(-1,1)$. Moreover,  $$\lambda_\Omega(g(z))=\lambda_{\mathbb{D}}(z) |g'(z)|.$$  So, for $w=g(z)$, in view of \eqref{sf1} we have $$\frac{|\nabla f(w)|}{\lambda_\Omega(w)}={|\nabla h(z)|}(1-|z|^2)\le \frac{4}{\pi}(1-h(z)^2)=\frac{4}{\pi}{(1-|f(w)|^2)}.$$ Thus, $$\frac{|\nabla f(w)|}{(1-|f(w)|^2)}\le\frac{4}{\pi}\lambda_\Omega(w).$$

By integrating the previous inequality throughout the family of paths joining $z_1$ and $z_2$ (as at the end of the proof of Theorem~\ref{main}), we get
 \begin{equation}
d_h(f(z_1),f(z_2))\le \frac{4}{\pi}d_\lambda(z_1,z_2) \text{ for all } z_1,z_2\in\Omega.
\end{equation}

\subsection{Real $\varrho$-harmonic mappings and our setting (Real  $R-$harmonic mappings)}\label{subse1}
If $f$ is real, then \eqref{el} can be re-stated as follows:
\begin{equation}\label{ele}\Delta f +\frac{\varrho_u(f(z),0)-\imath \varrho_v(f(z),0)}{\varrho(f(z),0)}(f_x^2+f_y^2)=0.\end{equation} In particular, we see that  $\varrho_v(u,0)\equiv 0$ or $f$ is a constant function.

Let $R(u) = \varrho(u,0)$. If $f$ is a real harmonic function  of the unit disk onto the interval $(\alpha,\beta)$, then \begin{equation}\label{8may}\Delta f +\frac{R'(f)}{R(f)}(|\nabla f|^2) =0,\end{equation} where $R$ is a metric defined in the interval $(\alpha,\beta)$. Observe that $R$ can be extended to the infinite strip-domain $S(\alpha, \beta):=\{x+\imath y, x\in (\alpha,\beta), y\in\mathbb{R}\}$ by setting $\rho(u,v)=R(u)=\varrho(u,0)$.

Moreover, we have this important fact: \emph{$f$ is real $\varrho$-harmonic if and only if $f$ is real $\rho$-harmonic.} This is why we will consider the Gaussian curvature of $\rho$ instead of $\varrho$. We will refer to such real harmonic mappings as \emph{real  $R-$harmonic mappings}.

The Gaussian curvature of $\rho$ is given by
\begin{equation}\label{gkrr}\mathcal{K}_\rho(u,v) = -\frac{1}{R(u)^2}\left(\frac{R'(u)}{R(u)}\right)'.\end{equation}
In fact, equation \eqref{8may} is equivalent to the Laplace equation  $$\Delta g = 0,$$ where \begin{equation}\label{gg}g:=\frac{H(f)}{H(1)}:\ID\to (-1,1),\end{equation} while
\begin{equation}\label{nzero}H(u):=-\frac{1}{2}\left(\int_0^1 R(u)du+\int_0^{-1}R(u)du\right)+\int_0^u R(u)du,\end{equation} and $$H(1) = \frac{1}{2}\int_{-1}^1 R(u) du<\infty,$$ provided that $R$ belongs to the Lebesgue space $\mathcal{L}^1(-1,1)$. 
We will, however, prove that this condition $R\in \mathcal{L}^1(-1,1)$ is a priori satisfied for metrics of non-negative Gaussian curvature, with which we deal in our main result.

The following theorem contains some results for metrics which are not necessarily positively curved.

\begin{theorem}\label{kalajpos}
Assume that $f$ is a real $R$-harmonic mapping of the unit disk into the interval $(-1,1)$, and assume that $R$ is an increasing function in $(-1,0)$ and decreasing in $(0,1)$.
Then we have the following sharp inequality
\begin{equation}\label{222}|\nabla f(z)|\le 2\frac{1-|f(z)|}{1-|z|^2} \text{\ \,for all }  z\in\mathbb{D}.\end{equation} If $f(0)=0$ and $\int_{-1}^0R(t)dt =\int_0^1R(t)dt$, then we have
the sharp inequality
\begin{equation}\label{555}|f(z)|\le \frac{4}{\pi}\tan^{-1} |z|, \ z\in\mathbb{D}.\end{equation}
\end{theorem}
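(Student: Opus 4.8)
The plan is to reduce everything to the Euclidean harmonic function $g=H(f)/H(1)$ supplied by \eqref{gg}--\eqref{nzero} and to transfer the classical harmonic Schwarz estimates through the substitution $u\mapsto H(u)$. Since $H'=R>0$, the function $H$ is a strictly increasing primitive of $R$ with $H(-1)=-H(1)$, and the chain rule gives $\nabla g=\frac{R(f)}{H(1)}\,\nabla f$, so that $|\nabla g|=\frac{R(f)}{H(1)}|\nabla f|$. Because $f$ maps into $(-1,1)$, $g$ is a Euclidean harmonic function of $\mathbb{D}$ into $(-1,1)$, and all the tools quoted in the Introduction apply to $g$. I will also record the two identities $H(1)-H(u)=\int_u^1 R$ and $H(1)+H(u)=\int_{-1}^u R$, which follow from $H'=R$.

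For the first inequality \eqref{222} I would \emph{not} use the sharp $\tfrac4\pi$ gradient bound from \cite{kavu}, since it only yields the worse constant $8/\pi$; instead I would use the classical gradient estimate for a positive harmonic function $h$ on $\mathbb{D}$, namely $|\nabla h(z)|\le \frac{2h(z)}{1-|z|^2}$ (obtained by differentiating the Poisson integral at the origin, giving $|\nabla h(0)|\le 2h(0)$, and then transporting this to an arbitrary point by a disk automorphism). Applying it to the two positive harmonic functions $1-g$ and $1+g$ and keeping the better estimate gives $|\nabla g(z)|\le \frac{2(1-|g(z)|)}{1-|z|^2}$. Substituting $|\nabla g|=\frac{R(f)}{H(1)}|\nabla f|$ and $|g|=|H(f)|/H(1)$ then reduces \eqref{222} to the purely one-variable inequality $H(1)-|H(u)|\le R(u)(1-|u|)$ for $u\in(-1,1)$.

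This last inequality is where the monotonicity hypothesis enters, and it is the main obstacle. Writing $H(1)-|H(u)|=\min\{\,\int_u^1 R,\ \int_{-1}^u R\,\}$ via the two identities above, I would estimate, for $u\ge 0$, that $\int_u^1 R\le R(u)(1-u)$ because $R$ is decreasing on $(0,1)\supseteq(u,1)$, and for $u<0$ that $\int_{-1}^u R\le R(u)(1+u)$ because $R$ is increasing on $(-1,0)\supseteq(-1,u)$; in both cases the right-hand side equals $R(u)(1-|u|)$, and bounding the minimum by the appropriate one of the two integrals closes \eqref{222}.

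For the second, sharp inequality \eqref{555}, the symmetry hypothesis $\int_{-1}^0 R=\int_0^1 R$ forces the normalizing constant in \eqref{nzero} to vanish, so $H(0)=0$ and hence $g(0)=H(f(0))/H(1)=0$. The harmonic Schwarz lemma (the $f(0)=0$ case recalled in the Introduction) then gives $|g(z)|\le \frac4\pi\tan^{-1}|z|$, and it only remains to prove $|f|\le|g|$, i.e.\ $|H(u)|\ge H(1)\,|u|$. This is a convexity statement driven by the same hypothesis on $R$: since $H''=R'\le 0$ on $(0,1)$, $H$ is concave there, so the chord through $(0,0)$ and $(1,H(1))$ lies below the graph and $H(u)\ge H(1)u$ for $u\in[0,1]$; since $H''=R'\ge 0$ on $(-1,0)$, $H$ is convex there and $H(u)\le H(1)u$ for $u\in[-1,0]$, giving $|H(u)|\ge H(1)|u|$ in both ranges. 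Combining yields $|f|\le|g|\le\frac4\pi\tan^{-1}|z|$. Finally, sharpness of both estimates is read off from the Euclidean case $R\equiv 1$, where $H=\mathrm{id}$ and $g=f$, so that the extremal harmonic mappings for the classical disk estimates are extremal here as well.
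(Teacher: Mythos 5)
Your argument is correct, and for the gradient bound \eqref{222} it takes a genuinely different, more elementary route than the paper. The paper starts from the sharp estimate \eqref{3main}, $|\nabla g|\le\frac{4}{\pi}\cos\bigl(\frac{\pi}{2}g\bigr)/(1-|z|^2)$ (Chen's theorem), rewrites $\cos\frac{\pi}{2}g$ as $\sin\bigl[\frac{\pi}{2r}\int_v^1 R\bigr]$, and then invokes Lemma~\ref{lema}, whose proof is exactly two steps: $\sin x\le x$ followed by the monotonicity bounds $\int_v^1R\le(1-v)R(v)$ for $v\ge0$ and $\int_{-1}^vR\le(1+v)R(v)$ for $v<0$. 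You replace the first two of these three ingredients by the Harnack-type estimate $|\nabla h|\le 2h/(1-|z|^2)$ for positive harmonic $h$, applied to $1\pm g$, and your remaining one-variable inequality $H(1)-|H(u)|\le R(u)\,(1-|u|)$ is verified by precisely the same monotonicity argument that sits inside Lemma~\ref{lema}. Both routes land on the same constant $2$; yours avoids Chen's theorem entirely, at the cost of bypassing the intermediate bound $\frac4\pi\cos\frac\pi2 g\le 2(1-|g|)$ that the paper's packaging through Lemma~\ref{lema} makes explicit and whose sharpness the lemma also certifies. For \eqref{555} your argument (vanishing of $H(0)$, concavity of $H$ on $[0,1]$ and convexity on $[-1,0]$ giving $|H(u)|\ge H(1)|u|$, then the harmonic Schwarz lemma) coincides with the paper's. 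The one thin spot is the sharpness of \eqref{222}: no admissible extremal attains equality there, so \emph{read off from $R\equiv1$} really means checking that for $f(z)=\frac{2}{\pi}\arg\frac{1+z}{1-z}$ the ratio $|\nabla f(z)|(1-|z|^2)/(1-|f(z)|)$ tends to $2$ as $z=\imath y$, $y\to1^-$; this computation does succeed, so your claim stands, but it deserves a line of justification rather than the assertion that the classical extremals remain extremal.
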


The proof of Theorem~\ref{kalajpos} is presented in Subsection~\ref{jun2022}. We also have the following straightforward corollary of Theorem~\ref{kalajpos}.
\begin{corollary}
If $R$ is even in $(-1,1)$ and decreasing in $[0,1)$, then $$|\nabla f(z)|\le 2\frac{1-f(z)^2}{1-|z|^2} \text{\ \,for all } z\in\mathbb{D},$$ so that
$$d_h(f(z), f(w))\le 2 d_h(z,w) \text{\ \,for all }  z,w\in\mathbb{D}.$$ Further, if $f(0)=0$, then $$|f(z)|\le \frac{4}{\pi}\tan^{-1} |z|  \text{\ \,for all } z\in\mathbb{D}.$$
\end{corollary}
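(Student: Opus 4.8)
The plan is to deduce the entire statement from Theorem~\ref{kalajpos} together with two elementary consequences of the evenness of $R$. The first task is to verify that the hypotheses of Theorem~\ref{kalajpos} hold. On $[0,1)$ the function $R$ is decreasing by assumption, which is precisely the requirement on $(0,1)$. For $(-1,0)$, take $u_1<u_2<0$; then $-u_1>-u_2>0$, and since $R$ is even and decreasing on $[0,1)$ we get $R(u_1)=R(-u_1)<R(-u_2)=R(u_2)$, so $R$ is increasing on $(-1,0)$. Thus Theorem~\ref{kalajpos} applies and yields
\begin{equation*}
|\nabla f(z)|\le 2\,\frac{1-|f(z)|}{1-|z|^2}\quad\text{for all } z\in\mathbb{D}.
\end{equation*}

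Next I would upgrade this to the stated bound using the factorization $1-f(z)^2=(1-|f(z)|)(1+|f(z)|)$. Since $|f(z)|<1$ we have $1-|f(z)|\ge 0$ and $1+|f(z)|\ge 1$, whence $1-|f(z)|\le 1-f(z)^2$. Combining this with the previous display gives
\begin{equation*}
|\nabla f(z)|\le 2\,\frac{1-|f(z)|}{1-|z|^2}\le 2\,\frac{1-f(z)^2}{1-|z|^2},
\end{equation*}
which is the first assertion of the corollary.

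For the distance inequality I would pass from the pointwise bound to the integrated one exactly as at the end of the proof of Theorem~\ref{main}. Rewriting the estimate as $\frac{|\nabla f(z)|}{1-f(z)^2}\le\frac{2}{1-|z|^2}$ and integrating along any path $\gamma$ joining $z$ and $w$, one has $\left|\tfrac{d}{dt}f(\gamma(t))\right|\le|\nabla f|\,|\gamma'|$, so the left-hand side bounds the hyperbolic length of $f\circ\gamma$ in $(-1,1)$ (the interval carrying the density $(1-t^2)^{-1}$), while the right-hand side is twice the hyperbolic length of $\gamma$ in $\mathbb{D}$. Taking the infimum over $\gamma$ gives $d_h(f(z),f(w))\le 2\,d_h(z,w)$.

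Finally, for the sharp bound under the normalization $f(0)=0$ I would invoke the second half of Theorem~\ref{kalajpos}. Its extra hypothesis $\int_{-1}^0 R(t)\,dt=\int_0^1 R(t)\,dt$ is automatic here: the substitution $t\mapsto -s$ together with evenness gives $\int_{-1}^0 R(t)\,dt=\int_0^1 R(-s)\,ds=\int_0^1 R(s)\,ds$. Hence Theorem~\ref{kalajpos} yields $|f(z)|\le\frac{4}{\pi}\tan^{-1}|z|$ directly. Since every step is either a direct citation or a one-line inequality, there is no genuine obstacle; the only points requiring care are the \emph{direction} of the estimate $1-|f|\le 1-f^2$ and the observation that evenness supplies the integral normalization for free.
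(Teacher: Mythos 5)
Your proposal is correct and follows exactly the route the paper intends: the paper states this as a "straightforward corollary" of Theorem~\ref{kalajpos}, and your verification that evenness gives monotonicity on $(-1,0)$, the upgrade via $1-|f|\le(1-|f|)(1+|f|)=1-f^2$, and the automatic integral normalization $\int_{-1}^0R=\int_0^1R$ are precisely the intended (and correctly oriented) steps.
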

\section{Proof of main results}
Theorem~\ref{main} below is the main part of the Theorem~\ref{prop}, and it solves Problem~\ref{prob} for the modified metrics.

\begin{theorem}\label{main}
Assume that $R$ is a metric of non-negative Gaussian curvature in $(-1,1)$. If $f$ is an $R$-harmonic function of the unit disk into $(-1,1)$, then it satisfies the sharp
inequalities 
\begin{equation}\label{shine}
|\nabla f(z)|\le \frac{4}{\pi}\frac{1-f(z)^2}{1-|z|^2}
\end{equation}
and  \begin{equation}\label{444}d_h(f(z), f(w))\le \frac{4}{\pi} d_h(z,w)\end{equation}
for $z,w\in\mathbb{D}$, where $d_h$ is the hyperbolic metric.
\end{theorem}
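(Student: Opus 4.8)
The plan is to linearize the problem by passing to an associated Euclidean harmonic function, and then to reduce the whole statement to a single pointwise estimate on $R$ that encodes its log-concavity. With $H$ as in \eqref{nzero} (so $H'=R>0$ and $H(\pm1)=\pm H(1)$), the function $g:=H(f)/H(1)$ is a \emph{Euclidean} harmonic mapping of $\mathbb{D}$ into $(-1,1)$, as recorded after \eqref{gg}. First I would invoke the Euclidean Schwarz lemma of \cite{kavu}, namely $S(g)\le\frac4\pi$, i.e. $|\nabla g|(1-|z|^2)\le\frac4\pi(1-g^2)$. Since $\nabla g=(R(f)/H(1))\nabla f$, substitution yields the identity
\begin{equation*}
S(f)=\frac{H(1)\,(1-g^2)}{R(f)\,(1-f^2)}\,S(g),
\end{equation*}
so that \eqref{shine} follows once one proves the pointwise bound $H(1)(1-g^2)\le R(f)(1-f^2)$. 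Writing $u=f(z)$, $A=\int_{-1}^u R$, $B=\int_u^1 R$, $T=\int_{-1}^1 R=A+B$, and using $H(1)=T/2$ together with $H(1)^2-H(u)^2=AB$, this reduces to the scalar inequality
\begin{equation}\label{key}
2A B\le (1-u^2)\,R(u)\,T,\qquad\text{equivalently}\qquad \frac{2AB}{A+B}\le(1-u^2)R(u),
\end{equation}
to be established for every log-concave $R>0$ on $(-1,1)$; recall that $\mathcal{K}_\rho\ge0$ means exactly that $(\log R)'=R'/R$ is nonincreasing, by \eqref{gkrr}. Since $\log R$ is concave it lies below each tangent line, so $R$ is dominated by an exponential near the endpoints and $T<\infty$ automatically, which disposes of the a priori integrability of $R$.

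The estimate \eqref{key} is the heart of the matter, and I would prove it in two steps. Its left-hand side is the harmonic mean of $A$ and $B$, which is monotone in each argument; so if I replace $R$ by its tangent exponential majorant $\widetilde R(t)=R(u)\,e^{\kappa(t-u)}$ at the point $u$, where $\kappa=(\log R)'(u)$, then $\widetilde R\ge R$ by concavity of $\log R$, hence $\widetilde A\ge A$, $\widetilde B\ge B$, and (since $\widetilde R(u)=R(u)$) it suffices to establish \eqref{key} for $\widetilde R$. After the substitution $p=\kappa(1+u)$, $q=\kappa(1-u)$ (so $pq=\kappa^2(1-u^2)$ and $p+q=2\kappa$; one may assume $\kappa\ge0$ after the symmetry $u\mapsto-u$, the case $\kappa=0$ giving equality), a short computation turns \eqref{key} for $\widetilde R$ into the purely scalar inequality
\begin{equation}\label{scalar}
(p+q)\,(1-e^{-p})(e^{q}-1)\le p\,q\,(e^{q}-e^{-p}),\qquad p,q\ge0.
\end{equation}

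For the second step I would set $a=1-e^{-p}$ and $b=e^{q}-1$ (so $a+b=e^{q}-e^{-p}$), which recasts \eqref{scalar} as the harmonic-mean comparison $\frac{ab}{a+b}\le\frac{pq}{p+q}$, i.e. $\tfrac1a+\tfrac1b\ge\tfrac1p+\tfrac1q$. This rearranges to $\phi(p)\ge\phi(-q)$ for the single function
\begin{equation*}
\phi(x)=\frac{1}{1-e^{-x}}-\frac1x=\frac12+\frac12\Bigl(\coth\frac x2-\frac2x\Bigr),
\end{equation*}
and since $y\mapsto\coth y-\tfrac1y$ has derivative $\frac{1}{y^{2}}-\frac{1}{\sinh^{2}y}>0$ (because $\sinh y>y$ for $y>0$), the function $\phi$ is strictly increasing; hence $\phi(p)\ge\phi(-q)$ is equivalent to $p\ge-q$, which is trivial. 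This proves \eqref{scalar}, hence \eqref{key} and \eqref{shine}. The main obstacle is precisely \eqref{key}: the naive tangent-line bound moves $A+B=T$ the wrong way, and the device that rescues the argument is recasting \eqref{key} as a harmonic-mean comparison, after which the monotonicity of $\phi$ closes the estimate. Finally, \eqref{444} follows by integrating \eqref{shine} in the form $|\nabla f|/(1-f^2)\le\frac4\pi\,(1-|z|^2)^{-1}$ along paths and taking the infimum over curves joining $z$ and $w$, exactly as in the proof of Corollary~\ref{rrje}; and the sharpness of both inequalities is inherited from the Euclidean case $R\equiv\mathrm{const}$, where \eqref{key} is an equality and the bound of \cite{kavu} is already sharp.
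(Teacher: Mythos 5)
Your argument is correct, and while it shares the paper's overall architecture, it handles the crucial lemma by a genuinely different and more transparent route. Like the paper, you linearize via $g=H(f)/H(1)$ from \eqref{gg}, invoke the Euclidean Schwarz lemma for $g$, and reduce \eqref{shine} to the pointwise bound $H(1)(1-g^2)\le R(f)(1-f^2)$, which --- after the identity $H(1)^2-H(u)^2=AB$ with $A=\int_{-1}^uR$, $B=\int_u^1R$ --- is exactly Lemma~\ref{propi1} applied to $\psi=H/H(1)$. The difference lies in the proof of that lemma. The paper bounds $1\mp f(x)$ by tangent-exponential integrals, carries the normalization $\int_{-1}^1f'=2$ along as a constraint $U\ge U_{k,x}$, checks monotonicity of a piecewise-rational function of $U$, and finally verifies the two-parameter inequality \eqref{eq:r<1} through a third-derivative convexity analysis of the auxiliary function $\dif$. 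You instead recognize the left-hand side as the harmonic mean $2AB/(A+B)$, which is increasing in each argument, so the tangent-exponential majorant can be substituted for $R$ wholesale with no normalization constraint to track; the resulting scalar inequality is in fact the same one the paper ends up with (both are equivalent to $\frac{pq}{p+q}\ge\frac{2\sinh(p/2)\sinh(q/2)}{\sinh((p+q)/2)}$ under your substitution), but you settle it via the monotonicity of $x\mapsto\frac1{1-e^{-x}}-\frac1x$, i.e.\ of $y\mapsto\coth y-\frac1y$, a one-line calculus fact. The supporting details all check out: the identity $S(f)=\frac{H(1)(1-g^2)}{R(f)(1-f^2)}S(g)$, the computation $a+b=e^q-e^{-p}$, the reduction of $\kappa<0$ by the symmetry $u\mapsto-u$, the equality case $\kappa=0$, the a priori integrability of $R$ from log-concavity, and the deduction of \eqref{444} by integration along paths and of sharpness from the Euclidean case $R\equiv\mathrm{const}$. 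What your approach buys is the elimination of the constrained optimization over $U$ and of the sign analysis of $\dif'''$; what the paper's version buys is a free-standing statement (Lemma~\ref{propi1}) about arbitrary diffeomorphisms with log-concave derivative, stated independently of the harmonic-mean reformulation.
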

To prove Theorem~\ref{main}, we need the following lemma, which is of interest in its own right.  
\begin{lemma}\label{propi1}
Assume that $f$ is an increasing $\mathcal{C}^1$ diffeomorphism of $[-1,1]$ onto itself such that $f'$ is log-concave. Then for all $x\in[-1,1]$ we have the inequality
\begin{equation}\label{gine}1-f(x)^2\le f'(x)(1-x^2).\end{equation}
\end{lemma}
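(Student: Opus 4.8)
The plan is to restate the inequality in a multiplicative form and then reduce the problem to a single explicit family of extremal maps. Writing $g=f'$ and using $f(\pm1)=\pm1$, we have $1-f(x)=\int_x^1 g$ and $1+f(x)=\int_{-1}^x g$, so that $1-f(x)^2=A(x)B(x)$ with $A(x)=\int_x^1 g$ and $B(x)=\int_{-1}^x g$; note $A+B=\int_{-1}^1 g=2$. Thus \eqref{gine} is equivalent to $g(x)(1-x^2)\ge A(x)B(x)$ for every $x\in(-1,1)$ (the cases $x=\pm1$ being equalities in the limit). Since $A,B$ depend only on the masses of $g$ on the two sides of $x$, the first and main step is to show that, for fixed $x$ and fixed masses $A,B$, the value $g(x)$ is smallest when $g$ is log-affine; the inequality then has to be checked only on that one-parameter family.

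First I would carry out this reduction. Fix $x\in(-1,1)$ and let $g$ be any admissible (positive, log-concave) derivative with masses $A,B$. Since $\log g$ is concave it has a supergradient $p$ at $x$, so $g(t)\le g(x)e^{p(t-x)}=:\widetilde g(t)$ for all $t\in[-1,1]$. The tangent exponential $\widetilde g$ is log-affine, takes the same value $g(x)$ at $x$, and has masses $\widetilde A\ge A$ and $\widetilde B\ge B$. For log-affine functions $\mu e^{c(t-x)}$ the two masses are $A=\mu I_+(c)$, $B=\mu I_-(c)$, where $I_+(c)=\int_x^1 e^{c(t-x)}\,dt$ is increasing in $c$ and $I_-(c)=\int_{-1}^x e^{c(t-x)}\,dt$ is decreasing in $c$; a short Jacobian computation then gives $\partial\mu/\partial A>0$ and $\partial\mu/\partial B>0$, i.e. the value at $x$ of a log-affine function is strictly increasing in each of its two masses. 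Applying this to $\widetilde g$ and to the (unique) log-affine $g_\ast$ with masses exactly $A,B$, and using $\widetilde A\ge A$, $\widetilde B\ge B$, yields $g_\ast(x)\le\widetilde g(x)=g(x)$. Because $g_\ast$ has the same masses as $g$, the product $AB$ is unchanged, so it suffices to prove $g_\ast(x)(1-x^2)\ge AB$.

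The second step is the explicit verification on log-affine maps. Here $g_\ast(t)=\lambda e^{ct}$ and $\int_{-1}^1 g_\ast=2$ forces $\lambda=c/\sinh c$. A direct computation gives $1+f(x)=(e^{cx}-e^{-c})/\sinh c$, $1-f(x)=(e^c-e^{cx})/\sinh c$ and $f'(x)=(c/\sinh c)e^{cx}$, so that after multiplying by $\sinh^2 c/e^{cx}$ the desired inequality becomes the clean one-line statement
\[
2\bigl(\cosh c-\cosh(cx)\bigr)\le c\,\sinh c\,(1-x^2),
\]
valid for all real $c$ and all $x\in[-1,1]$. I would prove this by comparing Taylor coefficients in $c$: matching the coefficient of $c^{2k}$ reduces it to $1-x^{2k}\le k(1-x^2)$, which is immediate from $1-x^{2k}=(1-x^2)(1+x^2+\cdots+x^{2k-2})$ together with $x^{2j}\le1$. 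Equality propagates back only when $c=0$, i.e. $f=\mathrm{id}$, which matches the sharpness of \eqref{gine}.

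The remaining points are routine: existence and uniqueness of $g_\ast$ with prescribed masses follow from the monotonicity of $I_+/I_-$ in $c$, and a mollification argument lets one assume enough smoothness for the supergradient step. The main obstacle is the reduction in the first step. The functional $g\mapsto g(x)(1-x^2)-A(x)B(x)$ is not convex on the (non-convex) set of log-concave densities, so one cannot simply pass to extreme points; the whole content of the argument is the tangent-exponential domination $g\le\widetilde g$ combined with the monotonicity of the junction value $\mu(A,B)$ in the two masses, which together force the minimizer of $g(x)$ to be log-affine.
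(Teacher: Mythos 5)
Your proof is correct, and its skeleton coincides with the paper's: both arguments replace $f'$ by the tangent exponential $f'(x)e^{k(t-x)}$ of the concave function $\log f'$ at the point $x$, reduce to the log-affine family normalized by $\int_{-1}^1 f'=2$, and arrive at exactly the same extremal inequality $2\bigl(\cosh c-\cosh(cx)\bigr)\le c\,\sinh c\,(1-x^2)$. The differences lie in how the reduction is organized and in how that final inequality is verified. The paper keeps $U=f'(x)$ and the slope $k$ as free parameters, bounds $f(x)^2$ from below by the maximum of the squares of the two one-sided exponential estimates, checks that the resulting ratio $\rho(U,k,x)$ is nonincreasing in $U$ (a piecewise-rational case analysis), and then pins $U$ at its minimal admissible value $U_{k,x}=e^{kx}k/\sinh k$. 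You instead factor $1-f(x)^2=A(x)B(x)$ with the two side masses and prove that the junction value $\mu(A,B)$ of a log-affine density is increasing in each mass; this turns the reduction into a short monotonicity statement and dispenses with the case analysis. For the extremal inequality, the paper differentiates three times in $x$ and uses a convex--concave splitting together with the boundary data $\mathrm{dif}'(0)=\mathrm{dif}'(1)=\mathrm{dif}(1)=0$, whereas you compare Taylor coefficients in $c$, reducing everything to $1-x^{2k}\le k(1-x^2)$; your version is more transparent and also isolates the equality case $c=0$. Both routes are complete; the only point you should make explicit is the degenerate slope $p=0$ (the paper handles $k=0$ by perturbation), which in your parametrization is just the removable singularity of $\lambda=c/\sinh c$.
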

\begin{proof}[{Proof of Lemma~\ref{propi1}}]
Let $h:=\log (f')$. Take any $x\in(-1,1)$. Since $h$ is concave, for some real $k$ and all $t\in(-1,1)$ we have
\begin{equation}
	h(t)\le h_x(t):=h(x)+k(t-x);
\end{equation}
by approximation, without loss of generality $k\ne0$.
Also, the condition that $f$ is an increasing diffeomorphism of $[-1,1]$ onto itself implies that $f(-1)=-1$ and $f(1)=1$.
So,
\begin{equation*}
\begin{aligned}
	f(x)&=1-\int_x^1  f'(t)dt\, \\
	&=1-\int_x^1 e^{h(t)} dt\,  \\
	&\ge1-\int_x^1 e^{h_x(t)} dt\,  \\
&	=g^+(U,x):=1-U\frac{1-e^{k(1-x)}}{-k},
\end{aligned}
\end{equation*}
where
\begin{equation}
	U:=e^{h(x)}=f'(x)>0.
\end{equation}
Similarly,
\begin{equation*}
\begin{aligned}
	f(x)&=-1+\int_{-1}^x e^{h(t)} dt\,  \\
	&\le-1+\int_{-1}^x e^{h_x(t)} dt\,  \\
&	=-g^-(U,x):=-1+U\frac{e^{-k(1+x)}-1}{-k}.
\end{aligned}
\end{equation*}
So,
\begin{equation}
	f(x)^2\ge g_2(U,k,x):=\max[g^+(U,x)_+^2,g^-(U,x)_+^2],
\end{equation}
where $z_+:=\max(0,z)$.

We also have
\begin{equation*}
	2=\int_{-1}^1 f'=\int_{-1}^1 e^h\le \int_{-1}^1 e^{h_x(t)}\,dt
	=U e^{-k x}\,\frac{2\sinh k}k,
\end{equation*}
so that
\begin{equation}
	U\ge U_{k,x}:=e^{k x}\,\frac k{\sinh k}.
\end{equation}

Thus, it is enough to show that
\begin{equation}
	\rho(U,k,x):=\frac{1-g_2(U,k,x)}{U(1-x^2)}\le1
\end{equation}
for $U\ge U_{k,x}$.
Note that $\rho(U,k,x)(1-x^2)$ is a continuous piecewise-rational function of $U$ such that $\R$ can be partitioned into several intervals such that on each of the intervals of the partition the expression $\rho(U,k,x)(1-x^2)$ coincides with one of the following three expressions:
\begin{equation}
	\rho_+(U):=\frac{1-g^+(U,x)^2}U,\quad
	\rho_-(U):=\frac{1-g^-(U,x)^2}U,\quad
	\rho_0(U):=\frac{1-0}U.
\end{equation}
We have
\begin{equation}
	\rho'_+(U)=-\frac{\left(e^{k-k x}-1\right)^2}{k^2}\le0,\quad
		\rho'_-(U)=-\frac{e^{-2 k (x+1)} \left(e^{k (x+1)}-1\right)^2}{k^2}\le0,\quad
\end{equation}
and $\rho'_0(U)<0$.

So, $\rho(U,k,x)$ is nonincreasing in $U$. It remains to show that
\begin{equation}
	r(k,x):=\rho(U_{k,x},k,x)\le1.  \label{eq:r<1}
\end{equation}

Note that $g^+(U_{k,x},x)=-g^-(U_{k,x},x)=\left(e^{k x}-e^k\right) \csch k+1$. So,
\begin{equation}
	r(k,x)=\frac{1-g^+(U_{k,x},x)^2}{U_{k,x}(1-x^2)}
	=  \frac{2 (\cosh k-\cosh k x)\csch k}{k \left(1-x^2\right)}.
\end{equation}

Inequality \eqref{eq:r<1} can be rewritten as
\begin{equation}\label{eq:dif<0}
\dif(x):=
(1-x^2)r(k,x)-(1-x^2)\le0
\end{equation}
for real $k>0$ and $x\in[0,1]$, because $\dif(-k,x)=\dif(k,x)=\dif(k,-x)$.

We have $\dif'''(x)=-2 k^2 \csch k\, \sinh k x\le0$. So, $\dif''$ is nonincreasing. Hence, there is some $c\in[0,1]$ such that $\dif$ is convex on $[0,c]$ and concave on $[c,1]$. Also, $\dif'(0)=\dif'(1)=\dif(1)=0$. Thus, \eqref{eq:dif<0} follows.
\end{proof}

\begin{proof}[Proof of Theorem~\ref{main}] Let us show that  $R\in\mathcal{L}^1(-1,1).$
In view of \eqref{gkrr}, $\log R$ is concave. Therefore,
$$\log R(t)\le \log R(0)+\frac{R'(0)}{R(0)} t \text{\ \,for all } t\in(-1,1),$$ and thus $$R(t)\le R(0) e^{\frac{R'(0)}{R(0)} t}.$$
Hence, $$\int_{-1}^1 R(t)dt<\infty.$$
Now we put  $$r:=H(1)  = \frac{1}{2}\int_{-1}^1 R(u)du.$$
Recall the Euclidean harmonic function $g$ defined in \eqref{gg}.  It comes down to estimating the gradient of the derivative of the function $ g $, which is equal to $$|\nabla g|=R(f) |\nabla f|/H(1).$$
For the real Euclidean harmonic function $g:\mathbb{D}\to (-1,1)$ we have \cite{chenm, kavu}:
\begin{equation}\label{3main}|\nabla g(z)|=\frac{R(f(z)) |\nabla f(z)|}{r}\le \frac{4}{\pi}\frac{\cos \frac{\pi}{2}g(z)}{1-|z|^2},\end{equation}
where $$g(z):=\frac{1}{r}\left(H(0)+ \int_0^{f(z)}R(u)du\right).$$
Let \begin{equation}\label{rrr}\mathcal{R}:=\frac{4}{\pi}\,\frac{\cos\left(\frac{\pi}{2r}\left(H(0)+ \int_0^{f(z)}R(u)du\right)\right)}{1-|z|^2}.\end{equation}
Note that $$\cos\frac{\pi}{2} b\le 
1-b^2$$ for $b\in[0,1]$.

Let $$
\psi(v):=\frac{\pi}{2r}\left(H(0)+ \int_0^{v}R(u)du\right)$$ and apply Lemma~\ref{propi1}. We get
\begin{equation}\label{14may}\mathcal{R}\le
\frac4\pi\,\frac{1-\psi(v)^2}{\psi'(v)}
\le\frac4\pi\,(1-v^2).\end{equation}

Combining \eqref{rrr}, \eqref{3main} and \eqref{14may}, we obtain \eqref{shine}. Concerning  \eqref{444}, notice that the proof of \cite[Theorem~1.2]{kavu} can be applied in this case, because the \break $\rho$-harmonicity is invariant under precomposition by M\"obius transformations.
\end{proof}
The following example shows that one cannot omit the condition of positive Gaussian
curvature. In fact, we cannot prove a weaker estimate with a constant larger than $4/\pi
$.
\begin{example}\label{myex}
Assume that $\varrho(w):=\frac{1}{1-|w|^2}$ and let $\rho(w):=\varrho(u,0)=\frac{1}{1-u^2}$. Then $f: \mathbb{D}\to (-1,1)$ is $\rho$-harmonic  (and $\varrho$-harmonic) if and only if $$f(z) = \tanh g(z)$$ for a Euclidean harmonic mapping $g$ of the unit disk in the real line $\mathbb{R}$. In particular, the functions $$f(z) =\tanh (nx), \ \ z=x+\imath y, \ \ n\in\Bbb{N},$$ are $\rho$-harmonic. Then $|\nabla f(z)|=n \mathrm{sech}\,^2(n x),$ and so, $$\frac{|\nabla f(z)|}{1-|f(z)|^2}(1-|z|^2)\bigg|_{z=0}=n,$$ so that in Theorem~\ref{main} we cannot omit the condition of the positiveness of Gaussian curvature, nor can we even prove a weaker statement with a larger constant factor instead of $4/\pi$. Observe that in this case $$\mathcal{K}_\rho(z) =-2(1+x^2)<0. $$ Of course, the curvature of the hyperbolic (Poincar$\grave{\mathrm{e}}$) metric is $\mathcal{K}_\varrho(z)=-4$, and
it is not equal to the curvature of $\rho$, 
even though both curvatures are negative.
\end{example}

In the following example it is given a result for a metric $R$ of zero Gaussian curvature.
\begin{example}
Assume that the Gaussian curvature of $R$ is zero. Then  $R(x)=e^{cx}$.  Moreover, by \eqref{3main},
$$|\nabla f(z)|\le A:=\frac{4 e^{-c f(z)} \sin\left[\frac{\pi}{2 \sinh(c)} \left(e^c-e^{c f(z)}\right)    \right] \sinh(c)}{c  \pi  \left(1-|z|^2\right)}.$$
Further, by the proof of Theorem~\ref{main},  $$|\nabla f(z)|\le A\le  \frac{4}{\pi}\frac{1-f(z)^2}{1-|z|^2}.$$
\end{example}



\subsection{Proof of Theorem~\ref{kalajpos}}\label{jun2022}
We need the following lemma.
\begin{lemma}\label{lema}  If $R\colon(-1,1)\to (0,+\infty)$ is positive, increasing in $(-1,0)$ and decreasing in $(0,1)$, if $v\in(-1,1)$, and if  
$$r=\frac{1}{2}\int_{-1}^1 R(u) \, du,$$
then we have the sharp inequality
\begin{equation}\label{leqleq}\sin\left[\frac{\pi  \int_v^1 R(u) \, du}{2r}\right]\le \frac{\pi}{2r}{\left(1-|v|\right) R(v)},\end{equation} and in particular
$$\sin\left[\frac{\pi  \int_v^1 R(u) \, du}{2 r}\right]\le \frac{\pi}{2r} {\left(1-v^2\right) R(v)}.$$

The constant $\pi/2$ is sharp even if we restrict the consideration to $\mathcal{C}^2$ diffeomorphisms $R\colon(-1,1)\to (0,\infty)$.



\end{lemma}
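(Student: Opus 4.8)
The plan is to reduce the trigonometric inequality to two one-line estimates by exploiting the unimodal shape of $R$ on each half-interval separately. Set $r=\tfrac12\int_{-1}^1R$ and write $a(v):=\frac{\pi}{2r}\int_v^1 R(u)\,du$, so that $a(v)$ decreases from $\pi$ to $0$ as $v$ runs from $-1$ to $1$; the claim is exactly $\sin a(v)\le\frac{\pi}{2r}(1-|v|)R(v)$. The only analytic input I would use is the elementary bound $\sin t\le t$ for $t\ge0$, together with the monotonicity of $R$. The whole task is then to bound $a(v)$ (or a reflected version of it) above by the right-hand side and feed that into $\sin t\le t$.

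For $v\in[0,1)$ I would use only that $R$ is decreasing on $(0,1)$. Then $R(u)\le R(v)$ for $u\in[v,1]$, so $\int_v^1 R\le(1-v)R(v)$ and hence $a(v)\le\frac{\pi}{2r}(1-v)R(v)$. Since $\sin a(v)\le a(v)$ and $|v|=v$ in this range, the assertion follows immediately.

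For $v\in(-1,0)$ I would first remove the absolute value by reflection. Writing $\int_v^1 R=2r-\int_{-1}^v R$ gives $a(v)=\pi-\beta$ with $\beta:=\frac{\pi}{2r}\int_{-1}^v R\in[0,\pi]$, and since $\sin(\pi-\beta)=\sin\beta$ we get $\sin a(v)=\sin\beta$. Now using only that $R$ is increasing on $(-1,0)$, we have $R(u)\le R(v)$ for $u\in[-1,v]$, whence $\int_{-1}^v R\le(1+v)R(v)$ and $\beta\le\frac{\pi}{2r}(1+v)R(v)$. As $\sin\beta\le\beta$ and $1-|v|=1+v$ here, this settles the remaining case. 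The weaker ``in particular'' estimate is then immediate from $1-v^2=(1-|v|)(1+|v|)\ge1-|v|$.

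No single step of the inequality is a genuine obstacle; the real content, and where I would spend the effort, is the sharpness of $\pi/2$. I would analyze the regime $v\to1^-$, in which both inequalities used above become asymptotically tight: $a(v)\to0$ forces $\sin a(v)/a(v)\to1$, while for $R$ continuous at $1$ with $R(1^-)>0$ the mean-value estimate gives $\int_v^1 R\sim R(v)(1-v)$, so $a(v)\big/\big(\tfrac{\pi}{2r}(1-v)R(v)\big)\to1$. Consequently the ratio of the two sides tends to $1$ and no constant below $\pi/2$ can hold; the cleanest witness is the flat metric $R\equiv1$, for which the inequality reads $\sin\frac{\pi(1-v)}2\le\frac{\pi}2(1-v)$ with quotient tending to $1$ as $v\to1$. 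To meet the stated $\mathcal{C}^2$ regularity I would replace $R\equiv1$ by a smoothed unimodal approximation that is flat near the endpoints, verifying the monotonicity hypotheses and the same limiting quotient. The one point requiring care is reconciling the word ``diffeomorphism'' with the unimodal requirement, which I would finesse by localizing the extremal sequence near the single endpoint $v=1$, where only the decreasing branch of $R$ is active.
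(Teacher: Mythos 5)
Your proof of the inequality itself coincides with the paper's: the same split at $v=0$, the same use of $\sin t\le t$ combined with the monotonicity of $R$ on each half-interval, and the same reflection $\int_v^1R=2r-\int_{-1}^vR$ to handle negative $v$; this part is correct as written. Where you genuinely diverge is the sharpness of $\pi/2$. The paper rewrites the inequality in the form $\cos\phi(v)\le(1-v^2)\phi'(v)$ with $\phi=\frac{\pi}{2}\psi$ --- note that this is really the \emph{second}, weaker inequality of the lemma, with $1-v^2$ in place of $1-|v|$ --- and then builds the two-parameter family of piecewise-quadratic concave functions \eqref{foraf}, sending $s=1/n\to0$ and $as^2\to1$ so that the ratio tends to $1$ near $v=0$, with a mollification step to produce smooth witnesses. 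You instead take $R\equiv1$ (or any smooth unimodal $R$ with $R(1^-)>0$) and let $v\to1^-$, where $\sin a(v)/a(v)\to1$ and $\int_v^1R\sim(1-v)R(v)$ force the ratio of the two sides of \eqref{leqleq} to tend to $1$. This is correct and considerably simpler, and it does establish sharpness of the constant in \eqref{leqleq} as literally stated; the regularity caveat you raise is handled exactly as you propose, and the paper's own use of the word ``diffeomorphism'' is no less loose, since its pre-mollification $R$ is constant on $[s,1]$. The one thing your route does \emph{not} deliver is sharpness of the ``in particular'' inequality with $1-v^2$: for $R\equiv1$ that ratio tends to $\tfrac12$ as $v\to1$ and never exceeds $2/\pi$, because the endpoint degeneration is absorbed by the extra factor $1+|v|$. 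The paper's construction, concentrated near $v=0$ with a steep initial slope, is exactly what is needed for that stronger assertion (which is also what one would want for sharpness of the subsequent corollary involving $1-f(z)^2$). So your argument is acceptable for the lemma as stated, but it proves a strictly weaker sharpness claim than the paper's does.
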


\begin{proof}[Proof of Lemma~\ref{lema}]
The proof of inequality \eqref{leqleq} is easy. We use here the fact that $R$ is decreasing in $[0,1)$ and the elementary inequality $\sin x\le x$ for $x\in[0,\pi/2]$. Then for $v\in[0,1]$ we have
\[
\begin{split}
\sin\left[\frac{\pi  \int_v^1 R(u) \, du}{ 2r}\right]&\le \frac{\pi  \int_v^1 R(u) \, du}{ 2r}\\
 &\le \pi \frac{\left(1-v\right) R(v)}{2r}.
\end{split}
\]
If $v<0$, then we use the fact that $R$ is increaing in $(-1,0)$. We come to the desired inequality as follows $$\sin\left(\frac{\pi}{2r} \int_v^{1}R(u)du\right)=\sin\left(\frac{\pi}{2r} \int_{-1}^{v}R(u)du\right)\le \pi \frac{\left(1+v\right) R(v)}{2r}.$$
To prove the sharpness part, observe that inequality \eqref{leqleq} is equivalent to \begin{equation}\label{itfoll}\cos \phi(v)\le(1-v^2)\phi'(v), \end{equation}
where
$$ \phi(v)=\frac{\pi}{2}-\frac{\pi  \int_v^1 R(u) \, du}{2 \int_0^1 R(u) \, du}=\frac{\pi  \int_0^v R(u) \, du}{2 \int_0^1 R(u) \, du}.$$
For $s, as^2\in(0,1)$ we define the concave diffeomorphism $\psi\colon[0,1]\to[0,1]$ by the formula
\begin{equation}\label{foraf}\psi(x):= \left\{
\begin{array}{ll}
 \left(1+2 a s-a s^2\right) x-a x^2 & \text{if }x>0\wedge x<s, \\
 1+\left(1-a s^2\right) (-1+x) & \text{if }x\geq s\wedge x\leq 1.
\end{array}
\right.\end{equation}
Now we define $\phi(x) = \frac{\pi}{2}\psi(x)$.

Then for $u=as^2$ we have $$\frac{\cos \phi(s)}{(1-s^2)\phi'(s)}=\frac{2\sin\left[\frac{1}{2} \pi  (1-s) (1-u)\right]}{\pi \left(1-s^2\right) (1-u)}.$$ The supremum of the latter expression is equal to $1$. It is ``attained in the limit'', for instance, if $s=1/n\to 0$ and $u = (n-1)^2/n^2=as^2\to 1$, with $n\to \infty$.

To prove the last statement, extend $\psi$ in $[-1,1]$ by $\psi(x)= -\psi(-x)$ and define $R(x)=\psi'(x)$, for $x\in[-1,1]$. Then $R$ is not smooth, but it is continuous on $[-1,1]$. 

We introduce appropriate mollifiers: Fix a smooth even function
$\sigma:\R\to[0,1]$ which is compactly supported in the interval
$(-1,1)$ and satisfies $\int_\R\sigma=1$. For $\varepsilon>0$ consider the
mollifier
\begin{equation}\label{mol}
     \sigma_\varepsilon(t):=\frac{1}{\varepsilon}\,
     \sigma\left(\frac{t}{\varepsilon}\right).
\end{equation}
It is compactly supported in the interval $(-\varepsilon,\varepsilon)$ and
satisfies $\int_\R\sigma_\varepsilon=1$. For $\varepsilon>0$ define
$$\varphi_\varepsilon(x): = \int_{\Bbb R} \psi(y)
\frac{1}{\varepsilon}\sigma(\frac{x-y}{\varepsilon})dy=\int_{\Bbb R}
\psi(x-\varepsilon z)\sigma(z)dz.$$
Because $\sigma$ is even we have
\[\begin{split}\varphi_\varepsilon(-x) &= \int_{\Bbb R}
\psi(-x-\varepsilon z)\sigma(z)dz\\&=\int_{\Bbb R}
\psi(x+\varepsilon z)\sigma(z)dz\\&=-\int_{\Bbb R}
\psi(x-\varepsilon z)\sigma(z)dz\\&=-\varphi_\varepsilon(x),\end{split}\]
and
 $$\varphi'_\varepsilon(x) =\int_{\Bbb R} \psi'(x-\varepsilon
z)\sigma(z)dz.$$ So $\varphi_\varepsilon$ is an increasing and odd function. Further we define $\psi_\varepsilon(x):=\frac{1}{\varphi_\varepsilon(1)}\varphi_\varepsilon(x)$. Then $\psi_\varepsilon(x):[-1,1]\to [-1,1]$ is a $\mathcal{C}^\infty$ increasing odd diffeomorphism. Then $\psi_\varepsilon(x)$ converges uniformly to $\psi$ and $\psi'_\varepsilon(x)$ converges uniformly to $\psi'(x)$ as $\varepsilon\to 0$. Thus the function $R_\varepsilon(x) =\psi'_\varepsilon(x)$ is an even function, increasing in $[-1,0]$ and decreasing in $[0,1]$ that   converges uniformly to $R$.

This implies that the constant $\pi/2$ is sharp even if we restrict the consideration to $\mathcal{C}^\infty$ diffeomorphisms.
\end{proof}

\begin{proof}[Proof of Theorem~\ref{kalajpos}] Since $R$ is positive, increasing in $(-1,0)$ and decreasing in $(0,1)$, it is clear that  $R\in\mathcal{L}(-1,1)$.
From \eqref{3main} we have \begin{equation}|\nabla g(z)|=\frac{R(f(z)) |\nabla f(z)|}{r}\le \frac{4}{\pi}\frac{\cos \frac{\pi}{2}g(z)}{1-|z|^2},\end{equation}
where $$g(z):=\frac{H(0)}{2r}+\frac{1}{r}\left( \int_0^{f(z)}R(u)du\right),$$ with $H(0)$ defined in \eqref{nzero}.

Further, $$\cos \frac{\pi}{2}g(z)=\sin\left[\frac{\pi  \int_v^1 R(u) \, du}{2r}\right].$$

In view Lemma~\ref{lema}, the inequality \eqref{222} is proved.
 To prove \eqref{555}, in view of the assumption, we observe first that $H(0)=0$. Since the function $\psi(u) =\int_0^u R(t)\frac{dt}{r}$ is concave on $[0,1]$ with $\psi(0)=\psi(1)-1=0$,  it satisfies the inequality $\psi(u)\ge u$. Therefore $$|f(z)|\le \frac{1}{r}\left| \int_0^{f(z)}R(u)du\right|=|g(z)|.$$
Now we use the Schwarz lemma for Euclidean harmonic functions (\cite{he, Duren2004}), which implies that $$|g(z)|\le \frac{4}{\pi}\tan^{-1} |z|.$$

Inequality \eqref{222} is sharp because of Lemma~\ref{lema}.
Inequality \eqref{555} is sharp, since it  coincides with the corresponding inequality \cite[p.~124]{ABR} for Euclidean harmonic mappings (planar case), where the sharpness part is established. Observe that, if $R\equiv 1$, then $R$ defines the Euclidean metric and satisfies the conditions of our theorem.  This finishes the proof of the theorem.
\end{proof}

\section{Concluding remarks}\label{subse}
The answer to the general question posed in  Problem~\ref{prob} is negative. In the following example it is shown that for metrics of zero Gaussian curvature, the quantity $S(f)$ defined in \eqref{sf}, can be arbitrary big.

\begin{example}\label{primjer}  
For $z=x+\imath y$, let $g(z)=\imath k y$, where $k>0$,  
and  assume that $\phi$ is a conformal automorphism of
$\mathcal{S}=\{x+\imath y: x\in(-1,1), y\in\mathbb{R}\}$  which maps $y$-axis onto $(-1,1)$. Let  $g_1=\phi \circ g$. 
For instance, one may define a conformal automorphism $\phi$ as follows:
$$\phi(z) := -\frac{2 \imath \log\left[-\imath+\frac{2}{-\imath+e^{\frac{\imath \pi  z}{2}}}\right]}{\pi }.$$ Next let
 $\varrho(w)=|\zeta'(w)|$, where we use notation $w=\phi(\zeta)$   and
$w\mapsto \zeta(w)$ denotes the inverse function to $\phi$. Then $g_1$
is   $\varrho$-harmonic, $\lambda_0(\imath y)=\pi/2$  and $|\nabla g_1(\imath y)|=k|\phi'(\imath y)$. Also, $|\nabla g_1(0)|=2k$.   Here $\lambda_0(z)$ is the hyperbolic metric of the strip.
Since the expression \eqref{sf} is invariant with respect to conformal maps and hyperbolic
metrics, by taking a conformal mapping $a$ of the unit disk onto the strip $\mathcal{S}$ satisfying $a(0)=0$, and defining $f(z)=g_1(a(z))$, we see that
$$S(f(z))=\frac{|\nabla f(z)|(1-|z|^2)}{1-|f(z)|^2}=\frac{|\nabla g_1(a(z))|}{(1-g_1(a(z))^2)\lambda_0(a(z))}$$
can be arbitrary big for $z=0$, namely $S(f(0))=4k/\pi$. We remark that in this case $$\varrho^2 (u,v)=\frac{2}{\cos [\pi  u]+\cosh[\pi  v]},$$ so that $\mathcal{K}_\varrho=0,$ but $\mathcal{K}_\rho=- \pi ^2 /4$, where $\rho(u,v)=\varrho(u,0)$.
\end{example}

The following example raises a similar question for positive harmonic functions.

\begin{example} It is  well known that a positive harmonic function defined in the half-plane is a contraction with respect to hyperbolic metric (see e.g. \cite{mmarkovic}). So, it is natural to ask whether such a result is true for positive $R$-harmonic functions defined in the half-plane, where $R$ is a metric of non-negative Gaussian curvature. The following example shows that this is not true. Let $R(x)=1-e^{-x}$ and define the positive $R$-harmonic function on $S(0,\infty):=\{x+\imath y: x>0, y\in\mathbb{R}\}$ by $$f(x,y) := \log\left[\frac{\pi }{\frac{\pi }{2}-\tan^{-1}\left[\frac{y}{x}\right]}\right]=R\left(\Re \left[-\imath/\pi \log(\imath z)\right]\right).$$ Observe that $-\log (R(x))''=\frac{1}{4} \csch\left[\frac{x}{2}\right]^2$. So, $R$ has a non-negative curvature.

On the other hand,
$$x\frac{|\nabla f(x,y)|}{{f(x,y)}}=\frac{2 x \sqrt{\frac{1}{\left(x^2+y^2\right) \left(\pi -2 \tan^{-1}\left[\frac{y}{x}\right]\right)^2}}}{\log\left[\frac{2 \pi }{\pi -2 \tan^{-1}\left[\frac{y}{x}\right]}\right]}=\frac{2 \sqrt{\frac{1}{\left(1+t^2\right) (\pi -2 \tan^{-1}[t])^2}}}{\log\left[\frac{2 \pi }{\pi -2 \tan^{-1}[t]}\right]}$$ for $y=t x$. The last expression has its maximum at $t=-1.4771\dots$ and it is equal to $1.0482\dots$. This implies, in particular, that $f\colon S(0,\infty)\to (0,+\infty)$ is not a contraction with respect to corresponding hyperbolic metrics. It would be of interest to find the best Lipschitz constant in this context.
\end{example}

\subsection*{Competing interests} The authors declare none.
\subsection*{Acknowledgments} We would like to thank the anonymous referee for very helpful
comments that had a significant impact on this paper.


\begin{thebibliography}{1}
\bibitem{A} {L.V. Ahlfors}: \emph{An extension of Schwarz Lemma,} Trans. Amer. Math. Soc.43 (1938) 359--364.
\bibitem{ABR}
{ S. Axler, P. Bourdon, W. Ramey:} {\it Harmonic function
theory}, Springer Verlag New York 2000.
\bibitem{Br1} K. Broder: {\it The Schwarz Lemma in K\"ahler and non-K\"ahler Geometry}, arXiv:2109.06331v3,
\bibitem{Br2} K. Broder: {\it The Schwarz Lemma: An Odyssey.} Rocky Mountain J. Math. 52 (2022), no. 4, 1141--1155.
\bibitem{bur}
{B. A. Burgeth}: \emph{Schwarz lemma for harmonic and
hyperbolic-harmonic functions in higher dimensions.} Manuscr. Math.
77, No.2-3, 283--291 (1992).

\bibitem{colo}
{F. Colonna}: \emph{The Bloch constant of bounded harmonic mappings.}
Indiana Univ. Math. J. 38, No.4, 829--840 (1989).

\bibitem{chenm}
H.~Chen:
\newblock \emph{The {Schwarz}-{Pick} lemma and {Julia} lemma for real planar harmonic}
  mappings.
\newblock {\em Sci. China, Math.}, 56(11):2327--2334, 2013.

\bibitem{Duren2004}
P.~Duren:
\newblock {\em Harmonic mappings in the plane}, volume 156 of {\em Cambridge
  Tracts in Mathematics}.
\newblock Cambridge University Press, Cambridge, 2004.
\bibitem{apde}
F.~ Forstneric, D.~Kalaj.
\newblock{\em {Schwarz-Pick lemma for harmonic maps which are conformal at a point}}
To appear in Analysis and PDE.
\bibitem{he}
E.~Heinz:
\newblock \emph{On one-to-one harmonic mappings.}
\newblock { Pac. J. Math.}, 9:101--105, 1959.



\bibitem{rocky}
D.~Kalaj:
\newblock \emph {On harmonic functions on surfaces with positive {Gaussian} curvature and
  the {Schwarz} lemma.}
\newblock { Rocky Mt. J. Math.} 44, No. 5, 1585--1593 (2014).

\bibitem{cvee5}
D.~Kalaj:
\newblock {\em Schwarz lemma for harmonic mappings into a geodesic line in a
  {Riemann} surfaces.}
\newblock { Complex Var. Elliptic Equ.}, 66(2):275--282, 2021.

\bibitem{kavu}
D.~Kalaj and M.~Vuorinen:
\newblock {\em On harmonic functions and the {Schwarz} lemma.}
\newblock { Proc. Am. Math. Soc.}, 140(1):161--165, 2012.
\bibitem{mmarkovic}
M.~Markovi{\'c}:
\newblock {\em On harmonic functions and the hyperbolic metric.}
\newblock { Indag. Math., New Ser.}, 26(1):19--23, 2015.

\bibitem{mat1}
M. Mateljevi\'c: \emph{Schwarz lemma and Kobayashi metrics for harmonic
and holomorphic functions},
J. Math. Anal. Appl., \textbf{464} (2018), 78--100.

\bibitem{mat2}M. Mateljevi\'c:    {\it The Ahlfors-Schwarz lemma,
curvature, distance
and distortion},  Bulletin T.CLIII de l'Acad\'{e}mie serbe des sciences et
des arts -2020 Classe des Sciences math\'{e}matiques et naturelles
Sciences math\'{e}matiques, No 45,  67--119, 2021.

\bibitem{Ni1} L. Ni: {\it General Schwarz lemmata and their applications.} Internat. J. Math. 30 (2019), no. 13, 17 pp.
\bibitem{Ni2} L. Ni: {\it Liouville theorems and a Schwarz lemma for holomorphic mappings between K\"ahler manifolds}.
Comm. Pure Appl. Math. 74 (2021), no. 5, 1100--1126.

\bibitem{oser}
{ R. Osserman:} \emph{A new variant of the Schwarz-Pick-Ahlfors lemma.}
Manuscr. Math. 100, No.2, 123--129 (1999).
\bibitem{petar}
P. Melentijevic:
\emph{Invariant gradient in refinements of Schwarz and Harnack inequalities.}
Ann. Acad. Sci. Fenn., Math. 43, No. 1, 391--399 (2018).

\bibitem{YaZh} X. Yang, F. Zheng: {\it On the real bisectional curvature for Hermitian manifolds,} Trans. Amer. Math.
Soc. 371, no. 4, 2703--2718 (2019)

\bibitem{royden} H. L. Royden: {\it The Ahlfors-Schwarz lemma in several complex variables,} Comment. Math. Helvetici 55,
547--558, (1980).
\bibitem{Y}{
 S.-T. Yau:} \emph{Remarks on conformal transformations} J. Diff. Geometry 8 (1973) 369--381.
\bibitem{Ya2} S.-T. Yau: \emph{A general Schwarz lemma for K\"ahler manifolds}. Amer. J. Math. 100(1), 197--203, (1978)

\end{thebibliography}
\end{document}